\documentclass[a4paper,12pt]{amsart}  

\usepackage{amsmath,amsthm,amssymb}
\usepackage{color}
\usepackage[all]{xy}
\usepackage{setspace}
\setstretch{1.25}
\allowdisplaybreaks

\setlength{\topmargin}{-60pt}
\setlength{\headheight}{12truept}
\setlength{\headsep}{25pt}
\setlength{\footskip}{37pt}
\setlength{\hoffset}{10mm}
\setlength{\voffset}{39pt}
\setlength{\oddsidemargin}{-7mm}
\setlength{\evensidemargin}{-7mm}
\setlength{\textheight}{217mm}
\setlength{\textwidth}{153mm}

\newtheorem{theorem}{Theorem}[section]
\newtheorem{proposition}[theorem]{Proposition}

\newtheorem{lemma}[theorem]{Lemma}
\newtheorem{corollary}[theorem]{Corollary}

\theoremstyle{remark}
\newtheorem{remark}[theorem]{Remark}

\numberwithin{equation}{section}

\newcommand{\R}{\mathbb R}
\newcommand{\Z}{{\mathbb Z}}

\newcommand{\C}{{\mathbb C}}
\newcommand{\Q}{{\mathbb Q}}

\title{Evaluation of Tornheim's type of double series}
\author[S.~Kadota, T.~Okamoto, K.~Tasaka]{Shin-ya Kadota, Takuya Okamoto, Koji Tasaka}
\keywords{Tornheim's type of double series, Zeta function of root systems, Double L-value, Double polylogalithm}
\subjclass[2010]{Primary~11M32, Secondary 40B05}
\address[Shin-ya Kadota]{Graduate School of Mathematics, Nagoya University}
\email{m13018c@math.nagoya-u.ac.jp}
\address[Takuya Okamoto]{Department of Mathematics, College of Liberal Arts and Sciences, Kitasato University}
\email{takuyaok@kitasato-u.ac.jp}
\address[Koji Tasaka]{Department of Information Science and Technology, Aichi Prefectural University}
\email{tasaka@ist.aichi-pu.ac.jp}

\date{}

\begin{document}
\maketitle
\begin{abstract}
We examine values of certain Tornheim's type of double series with odd weight. 
As a result, an affirmative answer to a conjecture about the parity theorem for the zeta function of the root system of the exceptional Lie algebra $G_2$, proposed by Komori, Matsumoto and Tsumura, is given.
\end{abstract}

\section{Introduction and main theorem}

For integers $a,b,k_1,k_2,k_3\ge1$, let
\[ \zeta_{a,b}(k_1,k_2,k_3):=\sum_{m,n>0} \frac{1}{m^{k_1}n^{k_2}(am+bn)^{k_3}}.\]
This series, which converges absolutely and gives a real number, was first introduced by the second author \cite{Okamoto1} in the study of evaluations of special values of the zeta functions of root systems associated with $A_2, B_2$ and $G_2$.
Since Tornheim \cite{Tornheim} first studied the value $\zeta_{1,1}(k_1,k_2,k_3)$, we call the value $\zeta_{a,b}(k_1,k_2,k_3)$ Tornheim's type of double series.
The purpose of this paper is to express $\zeta_{a,b}(k_1,k_2,k_3)$ with $k_1+k_2+k_3$ odd as $\Q$-linear combinations of two products of certain zeta values. 
As a prototype, we have in mind the analogous story for the parity theorem for multiple zeta values \cite[Corollary 8]{IKZ} and \cite{Tsumura} and for Tornheim's series \cite[Theorem 2]{HWZ}.
For example, the identity
\[ \zeta_{1,1}(1,1,3)=4\zeta(5)-2\zeta(2)\zeta(3)\]
is well-known.
Similar studies have been done in many articles \cite{Nakamura,SubbaraoSitaramachandrarao,Tsumura1,Tsumura2,Tsumura4,ZhouCaiBradley} (see also \cite{Okamoto}).
We will generalize the above expression to the value $\zeta_{a,b}(k_1,k_2,k_3)$ with $k_1+k_2+k_3$ odd.
As a consequence, we give an affirmative answer to a conjecture about special values of the zeta function of the root system of $G_2$, which was proposed by Komori, Matsumoto and Tsumura \cite[Eq.~(7.1)]{KMT5}.

We now state our main result.
We use the Clausen-type functions defined for a positive integer $k\ge2$ and $x\in\R$ by
\begin{equation}\label{eq1}
\begin{aligned}
C_k(x) &:= {\rm Re}\ Li_k(e^{2\pi ix}) = \sum_{m>0} \frac{\cos (2\pi mx)}{m^k},\\
S_k(x) &:= {\rm Im}\ Li_k(e^{2\pi ix}) = \sum_{m>0} \frac{\sin (2\pi mx)}{m^k},
\end{aligned}
\end{equation}
where $ Li_k(z)$ is the polylogarithm $\sum_{m>0} \frac{z^m}{m^k}$.
Note that $C_k(x)$ equals the Riemann zeta value $\zeta(k):=\sum_{m>0}\frac{1}{m^k}$ when $x\in \Z$, and $S_k(x)$ is $0$ when $x\in \frac{1}{2}\Z$.

\begin{theorem}\label{1_1}
For positive integers $N,a,b,k,k_1,k_2,k_3$ with $N={\rm lcm}(a,b)$ and $k=k_1+k_2+k_3$ odd, the value $ \zeta_{a,b}(k_1,k_2,k_3)$ can be expressed as $\Q$-linear combinations of $\pi^{2n}C_{k-2n}(\frac{d}{N})$ and $\pi^{2n+1}S_{k-2n-1} (\frac{d}{N})$ for $0\le n \le \frac{k-3}{2}$ and $ d\in \Z/N\Z$.
\end{theorem}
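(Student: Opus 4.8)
The plan is to begin with an integral representation that turns the three-fold series into a one-variable integral of a product of two polylogarithms. Inserting $\frac{1}{s^{k_3}}=\frac{1}{(k_3-1)!}\int_0^\infty u^{k_3-1}e^{-su}\,du$ with $s=am+bn$ and summing the resulting geometric-type series over $m,n$ gives
\[ \zeta_{a,b}(k_1,k_2,k_3)=\frac{1}{(k_3-1)!}\int_0^\infty u^{k_3-1}\,Li_{k_1}(e^{-au})\,Li_{k_2}(e^{-bu})\,du, \]
which converges for all $k_1,k_2,k_3\ge1$ (the only singularity, at $u=0$, is at worst logarithmic, hence integrable). The product of two polylogarithms under the integral is the genuinely \emph{depth-two} object that I must reduce to depth one.

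Next I would bring in the roots of unity. The concrete algebraic route is to write $\frac{1}{m^{k_1}}=\frac{a^{k_1}}{(am)^{k_1}}$ and apply, with $x=am$ and $c=bn$, the partial-fraction identity
\[ \frac{1}{x^{k_1}(x+c)^{k_3}}=\sum_{i=1}^{k_1}(-1)^{k_1-i}\binom{k_1+k_3-i-1}{k_3-1}\frac{1}{c^{k_1+k_3-i}\,x^{i}}+\sum_{j=1}^{k_3}(-1)^{k_1}\binom{k_1+k_3-j-1}{k_1-1}\frac{1}{c^{k_1+k_3-j}\,(x+c)^{j}}. \]
The first family factorizes into products $\zeta(i)\zeta(k-i)$ (times rational powers of $a,b$), while the second reduces $\zeta_{a,b}(k_1,k_2,k_3)$ to the two-index sums $\sum_{m,n>0}\frac{1}{n^{k-j}(am+bn)^{j}}$ for $1\le j\le k_3$. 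On each such sum a discrete Fourier filter over $a$-th roots of unity detects the residue of $am+bn$, and summing the resulting tails produces the values $Li_q(e^{2\pi i c/a})=C_q(c/a)+i\,S_q(c/a)$; since $a\mid N$ and $b\mid N$ by $N={\rm lcm}(a,b)$, every such root of unity is an $N$-th root $e^{2\pi i d/N}$, which is exactly why the answer is indexed by $d\in\Z/N\Z$. The powers of $\pi$ enter from the factor $(-2\pi i)^{q}/(q-1)!$ in the underlying Lipschitz--Lerch summation (equivalently, the residues picked up when the $u$-contour is rotated toward the imaginary axis) together with the even zeta values $\zeta(2n)\in\Q\,\pi^{2n}$ produced by the first family.

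The decisive step, and the only place where the oddness of $k$ enters, is a parity argument. Complex conjugation sends each $e^{2\pi i d/N}$ to $e^{2\pi i(N-d)/N}$, fixing every $C_j(d/N)$ and negating every $S_j(d/N)$, while a reflection change of variables ($u\mapsto-u$ in the contour picture, or $d\mapsto N-d$ in the Fourier filter) relates $\zeta_{a,b}(k_1,k_2,k_3)$ to the conjugated representation with overall sign $(-1)^{k}$. The harmonic product expansion from the partial fractions supplies one linear relation among the depth-one data, and this reflection supplies a second; for $k$ even the two coincide, but for $k$ odd they are independent, and solving the resulting system expresses $\zeta_{a,b}(k_1,k_2,k_3)$ purely through $\pi^{2n}C_{k-2n}(\frac{d}{N})$ and $\pi^{2n+1}S_{k-2n-1}(\frac{d}{N})$. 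The range $0\le n\le\frac{k-3}{2}$ is exactly where both companion indices $k-2n$ and $k-2n-1$ stay $\ge2$, so that the Clausen series converge. I expect the main obstacle to be making this separation rigorous: one must isolate the genuinely depth-two part of the representation, show it is precisely the piece controlled by the reflection, and simultaneously verify that the divergent $\zeta(1)$-type terms generated by the partial fractions (the $i=1$ and $j=1$ summands) cancel between the two families before the symmetry is applied. Tracking these cancellations and their accompanying powers of $\pi$ is the most delicate bookkeeping in the argument.
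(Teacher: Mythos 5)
Your overall route---reduce $\zeta_{a,b}(k_1,k_2,k_3)$ via partial fractions and root-of-unity filters to double polylogarithms at $N$-th roots of unity, then invoke a parity phenomenon---is essentially the alternative route the paper itself sketches in Remark 4.1, and it could be made to work; but as written there is a genuine gap exactly at what you call the decisive step. First, a local error in the reduction: the discrete Fourier filter applied to $\sum_{m,n>0} n^{-(k-j)}(am+bn)^{-j}$ does \emph{not} output the depth-one values $Li_q(e^{2\pi i c/a})$. Writing $l=am+bn$, it yields
$\frac{1}{a}\sum_{c}\sum_{n>0} e^{-2\pi i cbn/a}\,n^{-(k-j)}\bigl(Li_j(e^{2\pi i c/a})-\sum_{1\le l\le bn} e^{2\pi i cl/a}\,l^{-j}\bigr)$,
i.e.\ products of depth-one polylogarithms \emph{minus a genuine depth-two polylogarithm at roots of unity} (the inner truncated sum couples $l$ and $n$). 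So after all your reductions you still face the full family of values $Li_{r,s}(\mu_N^u,\mu_N^v)$ of odd weight $k$, and what you must prove is precisely the parity theorem for these depth-two objects.

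Second, and this is the substantive gap: your argument for that parity statement---conjugation fixes $C_j$ and negates $S_j$, a reflection gives a relation with sign $(-1)^k$, the harmonic product gives a second relation, and ``for $k$ odd they are independent, so solve the system''---is an assertion of the theorem, not a proof of it. The unknowns are not one or two numbers but all $Li_{r,s}(\mu_N^u,\mu_N^v)$ with $r+s=k$ and $u,v\in\Z/N\Z$, and the available relations mix different argument pairs: for instance the partial-fraction identity gives $Li_r(z_1)Li_s(z_2)=\sum_{p+q=r+s}\bigl(\binom{p-1}{s-1}Li_{q,p}(z_1z_2^{-1},z_2)+\binom{p-1}{r-1}Li_{q,p}(z_2z_1^{-1},z_1)\bigr)$, so it does not close up on a fixed pair $(u,v)$. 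Showing that this linear system determines the odd-weight depth-two values modulo depth-one data is exactly the content of the parity theorem of Panzer and Nakamura cited in the paper, whose known proofs require a genuine induction on weight (via differentiation) or the Lerch functional equation; it does not follow from counting two relations, and the unresolved cancellation of your divergent $i=1$, $j=1$ (i.e.\ $\zeta(1)$-type) terms lives inside this same unproved step. For contrast, the paper avoids the issue entirely: it packages all $(k_1,k_2,k_3)$ into the generating function $\int_0^1 Li(ax;t_1)Li(bx;t_2)\overline{Li(x;t_3)}\,dx$, splits each factor into its Bernoulli part $\beta$ and Clausen part $\gamma$ so that the restriction to odd weight is mere real/imaginary bookkeeping (Corollary \ref{2_2}), and then evaluates the resulting integrals in closed form by elementary manipulations of Bernoulli generating functions and integration by parts (Propositions \ref{3_4} and \ref{3_5}); no parity theorem for depth-two polylogarithms is needed. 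To repair your proof you must either import Panzer's functional equation as a black box (giving up self-containedness) or actually prove the depth-two parity theorem at roots of unity, which is the hard core your sketch currently skips.
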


Theorem \ref{1_1} will be proved in Section 4 by using the generating functions.
This leads to a recipe for giving a formula for the $\Q$-linear combination in Theorem \ref{1_1}.
More precisely, one can deduce an explicit formula from Corollary \ref{2_2} and Propositions \ref{2_3}, \ref{3_4} and \ref{3_5}, but it might be much complicated (we do not develop the explicit formulas in this paper).
As an example of a simple identity, we have
\begin{equation}\label{eq1_1}
 \zeta_{1,3}(1,1,3) = \frac{1}{81} \left(367\zeta(5)-19\pi^2\zeta(3)-27 \pi S_4(\tfrac13) -4 \pi^3 S_2(\tfrac13)\right).
\end{equation}
We apply Theorem \ref{1_1} to proving the conjecture suggested by Komori, Matsumoto and Tsumura \cite[Eq.~(7.1)]{KMT5}.
This will be described in Section 5.

It is worth mentioning that since the value $\zeta_{a,b}(k_1,k_2,k_3)$ can be expressed as $\Q$-linear combinations of double polylogarithms
\begin{equation}\label{eq1_2}
Li_{k_1,k_2}(z_1,z_2)=\sum_{0<m<n} \frac{z_1^m z_2^n}{m^{k_1}n^{k_2}},
\end{equation}
Theorem \ref{1_1} might be proved by the parity theorem for double polylogarithms obtained by Panzer \cite{Erik} and Nakamura \cite{Nakamura}, which is illustrated in Remark \ref{4_1}.
In this paper, we however do not use their result to prove Theorem \ref{1_1}, since we want to keep this paper self-contained.

The contents of this paper is as follows.
In Section 2, we give an integral representation of the generating function of the values $\zeta_{a,b}(k_1,k_2,k_3)$ for any integers $a,b\ge1$.
In Section 3, the integral is computed.
Section 4 gives a proof of Theorem 1.1.
In Section 5, we recall the question \cite[Eq.~(7.1)]{KMT5} and give an affirmative answer to this.

\subsection*{Acknowledgement}
This work was partially supported by JSPS KAKENHI Grant Numbers 15K17517 and 16H07115.
The authors are grateful to Kohji Matsumoto, Takashi Nakamura and Hirofumi Tsumura for initial advice and many useful comments.


\section{Integral representation}
In this section, we give an integral representation of the generating function of the values $\zeta_{a,b}(k_1,k_2,k_3)$ for any integers $a,b\ge1$.
The integral representation of the value $\zeta_{a,b}(k_1,k_2,k_3)$ was first given by the second author \cite[Theorem 4.4]{Okamoto1}, following the method used by Zagier. 
We recall it briefly.

For an integer $k\ge0$, the Bernoulli polynomial $B_k(x)$ of order $k$ is defined by
\[\sum_{k\ge0} B_k(x)\frac{t^k}{k!} = \frac{te^{xt}}{e^t-1}.\]
The polynomial $B_k(x)$ admits the following expression (see \cite[Theorem 4.11]{AIK}): for $k\ge1$ and $x\in \R$ ($x\in \R-\Z$, if $k=1$)
\[B_k(x-[x]) = \begin{cases}-2i \dfrac{k!}{(2\pi i)^k}\displaystyle\sum_{m>0} \dfrac{\sin(2\pi m x)}{m^k} & k\ge1:{\rm odd},\\ 
-2  \dfrac{k!}{(2\pi i)^k}\displaystyle\sum_{m>0} \dfrac{\cos(2\pi m x)}{m^k} & k\ge2:{\rm even},\end{cases}\]
where $i=\sqrt{-1}$ and the summation $\displaystyle\sum_{m>0}$ is regarded as $\displaystyle\lim_{N\rightarrow \infty} \sum_{N>m>0}$ when $k=1$ (this ensures convergence).
We define the modified (generalized) Clausen function for $k\ge1$ and $x\in \R$ ($x\in \R-\Z$, if $k=1$) by
\[ Cl_k(x-[x]) =  \begin{cases}-\dfrac{k!}{(2\pi i)^{k-1}}\displaystyle\sum_{m>0} \dfrac{\cos(2\pi m x)}{m^k} & k\ge1:{\rm odd},\\
-i \dfrac{k!}{(2\pi i)^{k-1}}\displaystyle\sum_{m>0} \dfrac{\sin(2\pi m x)}{m^k} & k\ge2:{\rm even}.\end{cases}\]
With this, for $k\ge1$ and $x\in\R$ ($x\in \R-\Z$ if $k=1$), the polylogarithm $ Li_k(e^{2\pi i x}) $ can be written in the form
\begin{equation}\label{eq2_1}
 Li_k(e^{2\pi i x})  =- \frac{(2\pi i)^{k-1}}{ k!}\left( Cl_k(x-[x]) + \pi i B_k(x-[x]) \right).
 \end{equation}

We introduce formal generating functions.
For $x\in \R-\Z$, let
\[ \beta(x;t):=\sum_{k>0} \frac{B_k(x-[x])t^k}{k!} \quad {\rm and} \quad \gamma(x;t):=\sum_{k>0} \frac{Cl_k(x-[x])t^k}{k!}.\]

\begin{proposition}\label{2_1}
For integers $a,b\ge1$, we have
\begin{align*}
&\sum_{k_1,k_2,k_3>0} \zeta_{a,b}(k_1,k_2,k_3) t_1^{k_1}t_2^{k_2}t_3^{k_3}\\
&=- \frac{1}{4\pi i}  \int_0^1 \big( \gamma(ax;2\pi it_1)\beta(bx;2\pi it_2) + \beta(ax;2\pi i t_1) \gamma(bx;2\pi it_2)\big) \beta(x;-2\pi it_3)dx \\
&+ \frac{1}{4\pi^2} \int_0^1 \big(\gamma(ax;2\pi it_1)\gamma(bx;2\pi it_2)-\pi^2 \beta(ax;2\pi it_1)\beta(bx;2\pi it_2)\big) \beta(x;-2\pi i t_3)dx .
\end{align*}
\end{proposition}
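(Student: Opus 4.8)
The plan is to obtain the whole generating function from a single Fourier-orthogonality integral and then feed in the generating-function form of \eqref{eq2_1}. First I would record the elementary integral representation
\[
\zeta_{a,b}(k_1,k_2,k_3)=\int_0^1 Li_{k_1}(e^{2\pi i ax})\,Li_{k_2}(e^{2\pi i bx})\,Li_{k_3}(e^{-2\pi i x})\,dx,
\]
which follows by expanding each polylogarithm as its Dirichlet series and using $\int_0^1 e^{2\pi i Nx}\,dx=\delta_{N,0}$: the surviving terms are exactly those with $am+bn-\ell=0$, and since $a,b,m,n\ge 1$ the index $\ell=am+bn$ is always a positive integer, so no constant term intrudes. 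For the boundary case $k_i=1$ this is justified by the same $\lim_{N\to\infty}\sum_{N>m>0}$ convention recalled before \eqref{eq2_1}.

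Next I would multiply by $t_1^{k_1}t_2^{k_2}t_3^{k_3}$, sum over $k_1,k_2,k_3>0$, and interchange the sum with the integral (valid for small $t_i$, or formally in $\Q[[t_1,t_2,t_3]]$). Summing \eqref{eq2_1} against $t^k$ gives
\[
\sum_{k>0}Li_k(e^{2\pi i x})t^k=-\frac{1}{2\pi i}\bigl(\gamma(x;2\pi i t)+\pi i\,\beta(x;2\pi i t)\bigr),
\]
and the same computation applied to $e^{-2\pi i x}=e^{2\pi i(-x)}$, using $Cl_k(1-x)=(-1)^{k-1}Cl_k(x)$ and $B_k(1-x)=(-1)^kB_k(x)$, gives
\[
\sum_{k>0}Li_k(e^{-2\pi i x})t^k=\frac{1}{2\pi i}\bigl(\gamma(x;-2\pi i t)-\pi i\,\beta(x;-2\pi i t)\bigr).
\]
Writing $\Gamma_1=\gamma(ax;2\pi i t_1)$, $B_1=\beta(ax;2\pi i t_1)$, and similarly $\Gamma_2,B_2$ with $(bx,t_2)$ and $\Gamma_3=\gamma(x;-2\pi i t_3)$, $B_3=\beta(x;-2\pi i t_3)$, multiplying the three factors yields the compact form
\[
\sum_{k_1,k_2,k_3>0}\zeta_{a,b}(k_1,k_2,k_3)\,t_1^{k_1}t_2^{k_2}t_3^{k_3}
=-\frac{1}{8\pi^3 i}\int_0^1\bigl(\Gamma_1+\pi i B_1\bigr)\bigl(\Gamma_2+\pi i B_2\bigr)\bigl(\Gamma_3-\pi i B_3\bigr)\,dx .
\]

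The decisive step is to split the third factor as $\Gamma_3-\pi i B_3=(\Gamma_3+\pi i B_3)-2\pi i B_3$ and to show that the $(\Gamma_3+\pi i B_3)$-part contributes nothing. Reading \eqref{eq2_1} as $Cl_k(x)+\pi i B_k(x)=-\frac{k!}{(2\pi i)^{k-1}}Li_k(e^{2\pi i x})$ shows that every plus-combination is a pure positive-frequency series,
\[
\Gamma_1+\pi i B_1=-2\pi i\sum_{k_1>0}Li_{k_1}(e^{2\pi i ax})t_1^{k_1},\qquad
\Gamma_3+\pi i B_3=-2\pi i\sum_{k_3>0}Li_{k_3}(e^{2\pi i x})(-t_3)^{k_3},
\]
and likewise for $\Gamma_2+\pi i B_2$. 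Thus $(\Gamma_1+\pi i B_1)(\Gamma_2+\pi i B_2)(\Gamma_3+\pi i B_3)$ is a combination of exponentials $e^{2\pi i(am+bn+\ell)x}$ with $am+bn+\ell\ge 1$, each integrating to $0$ over $[0,1]$. Discarding this vanishing piece leaves
\[
\sum_{k_1,k_2,k_3>0}\zeta_{a,b}(k_1,k_2,k_3)\,t_1^{k_1}t_2^{k_2}t_3^{k_3}
=\frac{1}{4\pi^2}\int_0^1\bigl(\Gamma_1+\pi i B_1\bigr)\bigl(\Gamma_2+\pi i B_2\bigr)B_3\,dx ,
\]
and expanding $(\Gamma_1+\pi i B_1)(\Gamma_2+\pi i B_2)=\Gamma_1\Gamma_2+\pi i(\Gamma_1 B_2+B_1\Gamma_2)-\pi^2 B_1 B_2$ together with $\frac{\pi i}{4\pi^2}=-\frac{1}{4\pi i}$ reproduces verbatim the two integrals in the proposition.

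I expect the only genuine obstacle to lie in the analytic bookkeeping of the first two paragraphs — establishing the integral representation and justifying the termwise interchange at $k_i=1$, where the polylogarithms converge only conditionally on the unit circle. Once that is in place, the algebraic reduction through the positive-frequency vanishing is immediate and forces precisely the stated closed form.
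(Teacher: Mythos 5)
Your proposal is correct and takes essentially the same route as the paper: the orthogonality integral representation, passage to generating functions via \eqref{eq2_3}, and elimination of the $\gamma(x;-2\pi i t_3)$-term by observing that the pure positive-frequency product integrates to zero --- which is exactly the paper's relation $\int_0^1 Li(ax;t_1)Li(bx;t_2)Li(x;-t_3)\,dx=0$, applied by your splitting $\Gamma_3-\pi i B_3=(\Gamma_3+\pi i B_3)-2\pi i B_3$ rather than by substitution. The only minor difference is that the paper justifies the $k_i=1$ cases with the regularized integral \eqref{eq11} (an $\varepsilon$-truncation of the integration domain, not merely the summation convention), which is the device you would need for the termwise-integration step you flag.
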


\begin{proof}
When $k_1,k_2,k_3\ge2$, it follows
\begin{align*}
\int_0^1 Li_{k_1}\big(e^{2\pi i ax}\big)Li_{k_2}\big(e^{2\pi i bx}\big)\overline{Li_{k_3}\big(e^{2\pi ix})}dx  &=\int_0^1 \sum_{m,n,l>0}\frac{e^{2\pi i max}e^{2\pi inbx}e^{-2\pi ilx}}{m^{k_1}n^{k_2}l^{k_3}}dx\\
&=\sum_{m,n,l>0}\frac{1}{m^{k_1}n^{k_2}l^{k_3}}\int_0^1e^{2\pi ix (am+bn-l)}dx\\
&= \zeta_{a,b}(k_1,k_2,k_3),
 \end{align*}
where $\overline{Li_{k_3}\big(e^{2\pi ix})}$ stands for complex conjugate of $Li_{k_3}\big(e^{2\pi ix})$.
For $k_1,k_2,k_3\ge1$, the above equality is justified by replacing the integral $\displaystyle\int_0^1$ with 
\begin{equation}\label{eq11}
\lim_{\varepsilon\rightarrow 0}\sum_{j=1}^{{\rm lcm}(a,b)} \int_{\frac{j-1}{{\rm lcm}(a,b)}+\varepsilon}^{\frac{j}{{\rm lcm}(a,b)}-\varepsilon},
\end{equation}
where ${\rm lcm}(a,b)$ is the least common multiple of $a$ and $b$ (see \cite[Theorem 4.4]{Okamoto1} for the detail).
Letting $ Li(x;t):=\displaystyle\sum_{k>0} Li_k(e^{2\pi i x})t^k$, we therefore obtain
\begin{equation}\label{eq2_2}
\sum_{k_1,k_2,k_3>0} \zeta_{a,b}(k_1,k_2,k_3) t_1^{k_1}t_2^{k_2}t_3^{k_3}= \int_0^1 Li(ax;t_1)Li(bx;t_2)\overline{Li(x;t_3)}dx.
\end{equation}
Furthermore, the generating function of $Li_k(e^{2\pi ix})$ with $x\in \R-\Z$ can be written in the form
\begin{equation}\label{eq2_3}
Li(x;t) = -\frac{1}{2\pi i }\left( \gamma(x;2\pi i t)+\pi i \beta(x;2\pi it)\right),
\end{equation}
and hence, the right-hand side of \eqref{eq2_2} is equal to
\begin{equation}\label{eq2_4}
\begin{aligned}
&\frac{1}{(2\pi i )^3}\int_0^1 \big(\gamma(ax;2\pi i t_1)+\pi i \beta(ax;2\pi it_1)\big)\\
&\times \big(\gamma(bx;2\pi i t_2)+\pi i \beta(bx;2\pi it_2)\big)\big(\gamma(x;-2\pi i t_3)-\pi i \beta(x;-2\pi it_3)\big)dx.
\end{aligned}
\end{equation}
We note that, similarly to \eqref{eq2_2}, one obtains the relation
\begin{align*}
\int_0^1 Li(ax;t_1)Li(bx;t_2)Li(x;-t_3)dx=0,
\end{align*}
and substituting \eqref{eq2_3} to the above identity, one has
\begin{align*}
& \int_0^1 \big(\gamma(ax;2\pi i t_1)+\pi i \beta(ax;2\pi it_1)\big)\big(\gamma(bx;2\pi i t_2)+\pi i \beta(bx;2\pi it_2)\big)\gamma(x;-2\pi i t_3)dx\\
&=- \pi i \int_0^1 \big(\gamma(ax;2\pi i t_1)+\pi i \beta(ax;2\pi it_1)\big)\big(\gamma(bx;2\pi i t_2)+\pi i \beta(bx;2\pi it_2)\big)\beta(x;-2\pi it_3)dx.
\end{align*}
With this, \eqref{eq2_4} is reduced to
\begin{align*}
-\frac{1}{(2\pi i )^2}\int_0^1 \big(\gamma(ax;2\pi i t_1)+\pi i \beta(ax;2\pi it_1)\big) \big(\gamma(bx;2\pi i t_2)+\pi i \beta(bx;2\pi it_2)\big)\beta(x;-2\pi it_3)dx,
\end{align*}
which completes the proof.
\end{proof}

The coefficient of $t^k$ in $\gamma(x;2\pi i t)$ (resp. $\beta(x;2\pi it)$) is a real-valued function, if $k$ is even, and a real-valued function times $i=\sqrt{-1}$, if $k$ is odd. 
Thus, comparing the coefficient of both sides, we have the following corollary.
For simplicity, for integers $a,b\ge1$ we let
\begin{equation}\label{eq111}
 F_{a,b}(t_1,t_2,t_3) :=  \int_0^1 \gamma(ax;t_1)\beta(bx;t_2) \beta(x;-t_3)dx,
\end{equation}
where the integral is defined formally by term-by-term integration and by \eqref{eq11}.

\begin{corollary}\label{2_2}
One has
\begin{align*}
&\sum_{\substack{k_1,k_2,k_3>0\\k_1+k_2+k_3:{\rm odd}}} \zeta_{a,b} (k_1,k_2,k_3)t_1^{k_1}t_2^{k_2}t_3^{k_3}\\
&=- \frac{1}{4\pi i} F_{a,b}(2\pi i t_1,2\pi it_2,2\pi it_3) -  \frac{1}{4\pi i} F_{b,a}(2\pi i t_2,2\pi it_1,2\pi it_3).
\end{align*}
\end{corollary}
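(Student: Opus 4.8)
The plan is to read off Corollary \ref{2_2} from Proposition \ref{2_1} by a reality (parity) argument, with no new computation. First I would observe that the first of the two integrals in Proposition \ref{2_1} is already the desired right-hand side. By the definition \eqref{eq111} of $F_{a,b}$ one has $\int_0^1 \gamma(ax;2\pi it_1)\beta(bx;2\pi it_2)\beta(x;-2\pi it_3)\,dx = F_{a,b}(2\pi it_1,2\pi it_2,2\pi it_3)$ and, after commuting the factors, $\int_0^1 \beta(ax;2\pi it_1)\gamma(bx;2\pi it_2)\beta(x;-2\pi it_3)\,dx = F_{b,a}(2\pi it_2,2\pi it_1,2\pi it_3)$. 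Hence the first integral, with its prefactor, equals $I_1:=-\tfrac{1}{4\pi i}\bigl(F_{a,b}(2\pi it_1,2\pi it_2,2\pi it_3)+F_{b,a}(2\pi it_2,2\pi it_1,2\pi it_3)\bigr)$; writing $I_2$ for the remaining term (with prefactor $\tfrac{1}{4\pi^2}$), Proposition \ref{2_1} reads $\sum_{k_1,k_2,k_3>0}\zeta_{a,b}(k_1,k_2,k_3)t_1^{k_1}t_2^{k_2}t_3^{k_3}=I_1+I_2$.

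The heart of the argument is the reality bookkeeping recorded just before the statement: the coefficient of $t^k$ in $\gamma(x;2\pi it)$ and in $\beta(x;2\pi it)$ is a real-valued function for $k$ even and a real-valued function times $i$ for $k$ odd. Each integrand appearing in $I_1$ and $I_2$ is a product of three such factors, one in each of $t_1,t_2,t_3$, so the coefficient of $t_1^{k_1}t_2^{k_2}t_3^{k_3}$ in the integrand is real when $k:=k_1+k_2+k_3$ is even and purely imaginary when $k$ is odd, since the parity of the number of imaginary factors equals the parity of $k$. The prefactor $\tfrac{1}{4\pi^2}$ is real, whereas $-\tfrac{1}{4\pi i}=\tfrac{i}{4\pi}$ interchanges the real and imaginary classes. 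I would therefore conclude that the coefficient of $t_1^{k_1}t_2^{k_2}t_3^{k_3}$ is real in $I_1$ exactly when $k$ is odd and real in $I_2$ exactly when $k$ is even, and purely imaginary otherwise.

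Finally I would invoke that every $\zeta_{a,b}(k_1,k_2,k_3)$ is a real number. Projecting $\sum\zeta_{a,b}t_1^{k_1}t_2^{k_2}t_3^{k_3}=I_1+I_2$ onto its odd-weight part, the $I_1$-contribution is real and the $I_2$-contribution is purely imaginary; as the left-hand side is real, the $I_2$-part vanishes and the odd-weight part of $\sum\zeta_{a,b}t_1^{k_1}t_2^{k_2}t_3^{k_3}$ equals the odd-weight part of $I_1$. Projecting onto the even-weight part, the even-weight part of $I_1$ is purely imaginary yet must equal a real quantity, so it vanishes; thus $I_1$ has no even-weight terms and coincides with its own odd-weight part. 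Combining the two observations gives precisely the claimed identity $\sum_{k_1+k_2+k_3\,\text{odd}}\zeta_{a,b}(k_1,k_2,k_3)t_1^{k_1}t_2^{k_2}t_3^{k_3}=I_1$. I expect the only delicate point to be this reality bookkeeping — keeping straight how the powers of $(2\pi i)$ inside $\gamma,\beta$, the prefactors $-\tfrac{1}{4\pi i}$ and $\tfrac{1}{4\pi^2}$, and the sign on $-t_3$ interact — rather than any analytic subtlety, since the term-by-term reading \eqref{eq11} of the formal integrals means no convergence question arises beyond what Proposition \ref{2_1} already settles.
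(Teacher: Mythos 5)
Your proof is correct and is essentially the paper's own argument: the paper derives Corollary \ref{2_2} from Proposition \ref{2_1} by exactly this reality bookkeeping (coefficients of $\gamma(x;2\pi it)$ and $\beta(x;2\pi it)$ real for even powers, real times $i$ for odd), then "comparing coefficients of both sides." Your write-up just makes explicit the two projections the paper leaves implicit — that the odd-weight part of the second integral vanishes and that $I_1$ has no even-weight part — which is precisely what justifies equating the odd-weight generating function with the full $I_1$.
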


Remark that, using the same method, one can give an integral expression of the generating function of the Riemann zeta values, which will be used later.

\begin{proposition}\label{2_3}
For integers $a,b\ge1$, we have
\begin{equation}\label{eq2_5}
\frac{1}{2\pi i} \int_0^1 \gamma(ax;2\pi it_1) \beta(bx;-2\pi it_2)dx=\sum_{\substack{r,s>0\\r+s:{\rm odd}}}\frac{{\rm gcd}(a,b)^{r+s}}{a^sb^r} \zeta(r+s) t_1^{r}t_2^{s}.
\end{equation}
\end{proposition}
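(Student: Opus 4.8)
The plan is to reduce both sides to Fourier coefficients and to invoke the orthogonality of trigonometric functions on $[0,1]$. First I would rewrite the two generating functions entirely in terms of the Clausen-type functions $C_k$ and $S_k$. Inserting the explicit formulas for $Cl_k(x-[x])$ and $B_k(x-[x])$ into the definitions of $\gamma$ and $\beta$, and separating according to the parity of $k$, one finds
\begin{align*}
\gamma(x;2\pi i t) &= -2\pi i \sum_{k:{\rm odd}} C_k(x)\,t^k + 2\pi \sum_{k:{\rm even}} S_k(x)\,t^k, \\
\beta(x;-2\pi i t) &= -2 \sum_{k:{\rm even}} C_k(x)\,t^k + 2i \sum_{k:{\rm odd}} S_k(x)\,t^k,
\end{align*}
which are consistent with \eqref{eq2_3}. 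Substituting $x\mapsto ax$ in the first and $x\mapsto bx$ in the second, the integrand $\gamma(ax;2\pi i t_1)\beta(bx;-2\pi i t_2)$ becomes a sum of four families of products, indexed by the parities of the exponent $r$ of $t_1$ and the exponent $s$ of $t_2$.

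Next I would integrate term by term and apply orthogonality. On $[0,1]$ one has $\int_0^1 \cos(2\pi p x)\cos(2\pi q x)\,dx = \int_0^1 \sin(2\pi p x)\sin(2\pi q x)\,dx = \tfrac12\delta_{p,q}$ and $\int_0^1 \cos(2\pi p x)\sin(2\pi q x)\,dx = 0$ for positive integers $p,q$. Hence the two mixed families ($C_r(ax)S_s(bx)$ with $r,s$ odd, and $S_r(ax)C_s(bx)$ with $r,s$ even) integrate to zero, while the two surviving families ($C_r(ax)C_s(bx)$ with $r$ odd, $s$ even, and $S_r(ax)S_s(bx)$ with $r$ even, $s$ odd) both satisfy $r+s$ odd, matching the summation range on the right-hand side. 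Expanding $C_r(ax)=\sum_{m>0}m^{-r}\cos(2\pi m a x)$ and similarly for the remaining factors, orthogonality forces the surviving frequencies to obey $ma=nb$, and the two families contribute with the same coefficient, yielding
\[
\frac{1}{2\pi i}\int_0^1 \gamma(ax;2\pi i t_1)\beta(bx;-2\pi i t_2)\,dx = \sum_{\substack{r,s>0 \\ r+s:{\rm odd}}} t_1^r t_2^s \sum_{\substack{m,n>0 \\ ma=nb}} \frac{1}{m^r n^s}.
\]

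It then remains to evaluate the inner Diophantine sum. Setting $g=\gcd(a,b)$ and writing $a=g a'$, $b=g b'$ with $\gcd(a',b')=1$, the equation $ma=nb$ is equivalent to $m a'=n b'$, whose positive solutions are exactly $m=b'j$, $n=a'j$ for $j\ge1$. Therefore
\[
\sum_{\substack{m,n>0 \\ ma=nb}} \frac{1}{m^r n^s} = \frac{1}{(a')^s (b')^r}\sum_{j>0}\frac{1}{j^{r+s}} = \frac{g^{r+s}}{a^s b^r}\,\zeta(r+s),
\]
which is precisely the asserted coefficient $\gcd(a,b)^{r+s}a^{-s}b^{-r}\zeta(r+s)$. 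The main obstacle is the rigorous justification of the term-by-term integration and orthogonality when $r=1$ or $s=1$, for which the Fourier series attached to $B_1$ and $Cl_1$ converge only conditionally. As already flagged in the definitions of $\beta$ and $\gamma$ and in \eqref{eq111}, these boundary terms must be read through the regularized integral \eqref{eq11}, and the interchange of summation and integration is legitimate in that regularized sense, handled exactly as in \cite[Theorem 4.4]{Okamoto1}. Once this point is granted, the computation above is routine.
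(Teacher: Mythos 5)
Your proof is correct, and your final Diophantine evaluation reproduces exactly the asserted coefficient, but your route is genuinely different from the paper's. The paper never expands $\gamma$ and $\beta$ into cosine and sine series. It instead starts from the identity $\int_0^1 Li(ax;t_1)\overline{Li(bx;t_2)}\,dx=\sum_{r,s>0}\frac{\gcd(a,b)^{r+s}}{a^s b^r}\zeta(r+s)\,t_1^r t_2^s$ (in which the exponential orthogonality and the count of solutions of $ma=nb$ --- your inner sum --- are left implicit), then uses the vanishing relation $\int_0^1 Li(ax;t_1)Li(bx;-t_2)\,dx=0$ together with \eqref{eq2_3} to rewrite that integral as $\frac{1}{2\pi i}\int_0^1\bigl(\gamma(ax;2\pi it_1)+\pi i\,\beta(ax;2\pi it_1)\bigr)\beta(bx;-2\pi it_2)\,dx$, and finally extracts the proposition from the parity observation made before Corollary \ref{2_2}: after the prefactor $\frac{1}{2\pi i}$, the $\gamma\beta$ contribution to the coefficient of $t_1^rt_2^s$ is real precisely when $r+s$ is odd, while the $\frac12\beta\beta$ contribution is real precisely when $r+s$ is even, so equating with the (real) right-hand side splits the identity by parity and the odd part is \eqref{eq2_5}. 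In your argument the restriction to $r+s$ odd instead falls out of cosine--sine orthogonality: the mixed products $C_rS_s$ and $S_rC_s$ vanish upon integration, and the survivors $C_rC_s$ ($r$ odd, $s$ even) and $S_rS_s$ ($r$ even, $s$ odd) automatically have $r+s$ odd and carry the same factor $4\pi i$, which combines with $\frac{1}{2\pi i}$ and the $\frac12$ from orthogonality to give coefficient $1$. What your approach buys is self-containedness: everything is an explicit real Fourier computation, and you actually prove the $\gcd$ evaluation of the sum over $ma=nb$ that the paper only asserts. What the paper's approach buys is economy: it reuses \eqref{eq2_3} and the conjugation/vanishing trick already set up for Proposition \ref{2_1}, avoiding your four-family case analysis. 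Both treatments defer the conditionally convergent $k=1$ terms to the regularized integral \eqref{eq11}, as you correctly note.
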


\begin{proof}
For any integers $a,b\ge1$, it follows
\[ \int_0^1 Li(ax;t_1)\overline{Li(bx;t_2)}dx=\sum_{r,s>0}\frac{{\rm gcd}(a,b)^{r+s}}{a^sb^r} \zeta(r+s) t_1^{r}t_2^{s}.\]
By the relation $\displaystyle\int_0^1 Li(ax;t_1)Li(bx;-t_2)dx=0$ and \eqref{eq2_3}, the left-hand side of the above equation can be reduced to 
\begin{equation*}
\frac{1}{2\pi i} \int_0^1 \big( \gamma(ax;2\pi it_1)+\pi i \beta(ax;2\pi it_1)\big) \beta(bx;-2\pi it_2)dx.
\end{equation*}
Comparing the coefficients of $t_1^rt_2^s$, we complete the proof.
\end{proof}

\section{Evaluation of integrals}
In this section, we compute the integral $F_{a,b}(t_1,t_2,t_3)$.

We denote the generating function of the Bernoulli polynomials by $\beta_0(x;t)$:
\[\beta_0(x;t):= \frac{te^{xt}}{e^t-1}=\sum_{k\ge0} B_k(x)\frac{t^k}{k!}.\]
For integers $b,c\ge1$, we set 
\begin{align*}
\alpha_{b} (t_1,t_2)&:=\beta_0(0;t_1)\beta_0(0;-t_2)\frac{e^{bt_1-t_2}-1}{bt_1-t_2},\\
\widetilde{\alpha}_{b,c}(t_1,t_2) &:= -t_1e^{-ct_1} \beta_0(0;-t_2) \frac{e^{bt_1-t_2}-1}{bt_1-t_2},
\end{align*}
which are elements in the formal power series ring $\Q[[t_1,t_2]]$.

\begin{lemma}\label{3_1}
For any integers $b,d\ge1$, we have
\[ e^{-dt_1}\alpha_b(t_1,t_2) =\alpha_b (t_1,t_2)+ \sum_{c=1}^{d} \widetilde{\alpha}_{b,c} (t_1,t_2).\]
\end{lemma}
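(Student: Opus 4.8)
The plan is to strip off the part common to both sides and reduce the statement to a one-variable identity. Writing
\[
G(t_1,t_2) := \beta_0(0;-t_2)\,\frac{e^{bt_1-t_2}-1}{bt_1-t_2},
\]
we have $\alpha_b(t_1,t_2)=\beta_0(0;t_1)\,G(t_1,t_2)$ and $\widetilde{\alpha}_{b,c}(t_1,t_2)=-t_1e^{-ct_1}\,G(t_1,t_2)$, so every term occurring in the asserted equation carries the factor $G$. Since $G(0,0)=1$, the series $G$ is a unit in $\Q[[t_1,t_2]]$, and hence the identity of the lemma holds in $\Q[[t_1,t_2]]$ if and only if it holds after cancelling $G$. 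First I would record this cancellation, which turns the claim into the single-variable identity
\[
e^{-dt_1}\beta_0(0;t_1)=\beta_0(0;t_1)-t_1\sum_{c=1}^{d}e^{-ct_1}.
\]

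Next I would substitute $\beta_0(0;t_1)=\dfrac{t_1}{e^{t_1}-1}$, divide through by $t_1$ (legitimate in $\Q[[t_1]]$ after extracting the common factor $t_1$), and rearrange to the equivalent form
\[
\sum_{c=1}^{d}e^{-ct_1}=\frac{1}{e^{t_1}-1}-\frac{e^{-dt_1}}{e^{t_1}-1}=\frac{1-e^{-dt_1}}{e^{t_1}-1}.
\]
This is just the finite geometric sum: the left-hand side equals $e^{-t_1}\dfrac{1-e^{-dt_1}}{1-e^{-t_1}}$, and using $1-e^{-t_1}=e^{-t_1}(e^{t_1}-1)$ one sees at once that this collapses to the right-hand side. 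That closes the argument.

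A clean alternative is induction on $d$, based on the observation that $e^{-t_1}\widetilde{\alpha}_{b,c}=\widetilde{\alpha}_{b,c+1}$ together with the base case $e^{-t_1}\alpha_b=\alpha_b+\widetilde{\alpha}_{b,1}$ (itself the geometric identity for $d=1$); applying $e^{-t_1}$ to the inductive hypothesis then shifts the index in the sum and reproduces the claim for $d+1$. Either way there is no serious obstacle here: the only point requiring care is the formal-power-series bookkeeping, namely verifying that $G$ is a unit so that the cancellation is valid and that the division by $t_1$ is carried out in $\Q[[t_1,t_2]]$ rather than by treating the expressions as honest functions. Once the reduction to the geometric series is made, the computation is entirely routine.
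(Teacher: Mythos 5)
Your proof is correct, and its skeleton matches the paper's: both arguments peel off the common factor $G(t_1,t_2)=\beta_0(0;-t_2)\frac{e^{bt_1-t_2}-1}{bt_1-t_2}$ and reduce the lemma to the one-variable identity $e^{-dt_1}\beta_0(0;t_1)=\beta_0(0;t_1)-t_1\sum_{c=1}^{d}e^{-ct_1}$. Where you differ is in how that identity is established. The paper stays inside the Bernoulli-polynomial formalism: it reads $e^{-dt_1}\beta_0(0;t_1)$ as $\beta_0(-d;t_1)$ and telescopes the recurrence $\beta_0(x;t)=\beta_0(x+1;t)-te^{xt}$ (i.e.\ $B_k(x)=B_k(x+1)-kx^{k-1}$, cited from Arakawa--Ibukiyama--Kaneko) down from $x=-d$ to $x=0$. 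You instead substitute $\beta_0(0;t_1)=t_1/(e^{t_1}-1)$ and close the computation with the finite geometric sum $\sum_{c=1}^{d}e^{-ct_1}=\frac{1-e^{-dt_1}}{e^{t_1}-1}$; these are of course the same identity, proved by summation rather than by telescoping. Your route is more self-contained (no external reference needed) and you are more explicit about the formal-power-series bookkeeping — the unit property of $G$ and the division by $t_1$ — which the paper passes over silently; in fact cancellation needs only that $\Q[[t_1,t_2]]$ is an integral domain, so even the unit observation is more than is strictly required. The paper's route, by contrast, keeps the statement aligned with the Bernoulli-polynomial identities it reuses elsewhere (e.g.\ in Lemma 3.3). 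Your inductive variant, using $e^{-t_1}\widetilde{\alpha}_{b,c}=\widetilde{\alpha}_{b,c+1}$, is also valid and arguably the cleanest formulation of the same telescoping.
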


\begin{proof}
By the relation $B_k(x)=B_k(x+1)-kx^{k-1}$ for $k\in \Z_{\ge0}$ (see \cite[Proposition 4.9 (2)]{AIK}), we have $\beta_0(x;t)=\beta_0(x+1;t)-te^{xt}$.
Using this formula with $x=-d,-d+1,\ldots,1$ repeatedly, one gets
\[ \beta_0(-d;t)=\beta_0(-d+1;t)-te^{-dt}=\cdots =\beta_0(0;t)-t\sum_{c=1}^d e^{-ct}.\]
Hence, we obtain
\begin{align*}
e^{-dt_1}\alpha_b(t_1,t_2) &= \beta_0(-d;t_1)\beta_0(0;-t_2) \frac{e^{bt_1-t_2}-1}{bt_1-t_2}\\
&=\alpha_b(t_1,t_2)-t_1 \sum_{c=1}^d e^{-ct_1}\beta_0(0;-t_2) \frac{e^{bt_1-t_2}-1}{bt_1-t_2}\\
&=\alpha_b(t_1,t_2)+ \sum_{c=1}^d\widetilde{\alpha}_{b,c}(t_1,t_2),
\end{align*}
which completes the proof.
\end{proof}

\begin{remark}\label{3_2}
Let us denote by $A_b(r,s)$ (resp.~$\widetilde{A}_{b,c}(r,s)$) the coefficient of $t_1^rt_2^s$ in $\alpha_b(t_1,t_2)$ (resp.~in $\widetilde{\alpha}_{b,c}(t_1,t_2)$).
Then, we have
\[A_b(r,s) = \sum_{\substack{p_1+q_1=r\\ p_2+q_2=s\\ p_1,p_2,q_1,q_2\ge0}} \frac{(-1)^{q_2+p_2} b^{p_1} B_{q_1}B_{q_2}}{p_1!p_2!q_1!q_2!(p_1+p_2+1)}\]
and 
\[\widetilde{A}_{b,c}(r,s) =  \sum_{\substack{p_1+q_1=r\\ p_2+q_2=s\\ p_1,p_2,q_2\ge0\\q_1\ge1}} \frac{(-1)^{q_1+q_2+p_2} c^{q_1-1} b^{p_1} B_{q_2}}{p_1!(q_1-1)!p_2!q_2!(p_1+p_2+1)},\]
where $B_k=B_k(1)=(-1)^kB_k(0)$ is the $k$-th Bernoulli number.
We note that since $\widetilde{\alpha}_{b,c}(t_1,t_2)\in t_1\Q[[t_1,t_2]]$, we have $\widetilde{A}_{b,c}(0,s)=0$ for any $s\in \Z_{\ge0}$.
\end{remark}

\begin{lemma}\label{3_3}
Let $b,d$ be positive integers with $d \in \{0,1,\ldots,b-1\}$.
Then, for $x\in (\frac{d}{b},\frac{d+1}{b})$, we have
\[ \beta(bx;t_1)\beta(x;-t_2) = e^{-dt_1}\alpha_b(t_1,t_2)  \beta_0(x;bt_1-t_2) - \beta(bx;t_1)-\beta(x;-t_2)-1,\]
where we recall $\beta(x;t)=\displaystyle\sum_{k>0} \dfrac{B_k(x-[x])}{k!}t^k$.
\end{lemma}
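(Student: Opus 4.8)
The plan is to observe that the three additive ``correction'' terms on the right are precisely what is needed to convert the product $\beta(bx;t_1)\beta(x;-t_2)$ into a product of the \emph{full} Bernoulli generating functions $\beta_0$, whose closed form $\beta_0(y;t)=\frac{te^{yt}}{e^{t}-1}$ carries its entire $y$-dependence in the single exponential factor $e^{yt}$. The starting point is that $\beta_0(y;t)$ contains the $k=0$ term $B_0=1$ that is omitted from $\beta$, so that $\beta(x;t)+1=\beta_0(x-[x];t)$. Adding $\beta(bx;t_1)+\beta(x;-t_2)+1$ to both sides of the asserted identity, the claim becomes the equivalent factored statement
\[
\big(\beta(bx;t_1)+1\big)\big(\beta(x;-t_2)+1\big)=e^{-dt_1}\,\alpha_b(t_1,t_2)\,\beta_0(x;bt_1-t_2),
\]
and it suffices to prove this.

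First I would pin down the floors, which is the only place the hypotheses enter. For $x\in(\tfrac{d}{b},\tfrac{d+1}{b})$ with $0\le d\le b-1$ one has $0<x<1$, hence $[x]=0$ and $x-[x]=x$, while $bx\in(d,d+1)$ gives $[bx]=d$ and $bx-[bx]=bx-d$. Using $\beta(y;t)+1=\beta_0(y-[y];t)$ twice, the left-hand side of the factored identity therefore equals $\beta_0(bx-d;t_1)\,\beta_0(x;-t_2)$.

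It then remains to substitute the closed form on both sides and simplify. Writing $\beta_0(0;t_1)=\frac{t_1}{e^{t_1}-1}$ and $\beta_0(0;-t_2)=\frac{-t_2}{e^{-t_2}-1}$ in the definition of $\alpha_b$, the factor $\frac{e^{bt_1-t_2}-1}{bt_1-t_2}$ appearing in $\alpha_b(t_1,t_2)$ cancels against the reciprocal factor coming from $\beta_0(x;bt_1-t_2)=\frac{(bt_1-t_2)e^{x(bt_1-t_2)}}{e^{bt_1-t_2}-1}$, leaving $e^{-dt_1}\alpha_b(t_1,t_2)\beta_0(x;bt_1-t_2)$ equal to $\frac{-t_1t_2\,e^{(bx-d)t_1-xt_2}}{(e^{t_1}-1)(e^{-t_2}-1)}$; on the other hand $\beta_0(bx-d;t_1)\beta_0(x;-t_2)=\frac{t_1e^{(bx-d)t_1}}{e^{t_1}-1}\cdot\frac{-t_2e^{-xt_2}}{e^{-t_2}-1}$ is visibly the same expression, completing the verification. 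I do not anticipate a genuine obstacle: the entire content is the reformulation $\beta(\cdot)+1=\beta_0(\{\cdot\})$ together with this single cancellation, and everything is an identity of formal power series in $t_1,t_2$ (legitimate because $x$ and $bx$ are non-integers throughout the open interval, so the delicate $k=1$ Bernoulli term never causes trouble). The only step needing care is reading off $[x]=0$ and $[bx]=d$ correctly from the interval hypothesis.
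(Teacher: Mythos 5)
Your proposal is correct and follows essentially the same route as the paper: both rewrite the identity in the factored form $\big(\beta(bx;t_1)+1\big)\big(\beta(x;-t_2)+1\big)=e^{-dt_1}\alpha_b(t_1,t_2)\beta_0(x;bt_1-t_2)$, use $[x]=0$ and $[bx]=d$ on the interval, and then verify via the closed form $\beta_0(y;t)=\frac{te^{yt}}{e^t-1}$, with the factor $\frac{e^{bt_1-t_2}-1}{bt_1-t_2}$ in $\alpha_b$ cancelling against its reciprocal in $\beta_0(x;bt_1-t_2)$. The only cosmetic difference is that the paper transforms the left side into the right by inserting this factor of $1$, whereas you reduce both sides to the common expression $\frac{-t_1t_2\,e^{(bx-d)t_1-xt_2}}{(e^{t_1}-1)(e^{-t_2}-1)}$.
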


\begin{proof}
Since $bx-[bx]=bx-d$ when $x\in (\frac{d}{b},\frac{d+1}{b})$, one has
\begin{align*}
&\big( \beta(bx;t_1)+1\big)\big(\beta(x;-t_2)+1\big) = \frac{t_1e^{(bx-d)t_1}}{e^{t_1}-1} \frac{-t_2 e^{-xt_2}}{e^{-t_2}-1}\\
&=e^{-dt_1} \frac{t_1}{e^{t_1}-1} \frac{-t_2}{e^{-t_2}-1} e^{(bt_1-t_2)x}\\
&=e^{-dt_1} \beta_0(0;t_1)\beta_0(0;-t_2) \frac{e^{bt_1-t_2}-1}{bt_1-t_2} \frac{(bt_1-t_2)e^{(bt_1-t_2)x}}{e^{bt_1-t_2}-1}\\
&=e^{-dt_1}\alpha_b(t_1,t_2)\beta_0(x;bt_1-t_2),
\end{align*}
from which the statement follows.
\end{proof}

\begin{proposition}\label{3_4}
For any integers $a,b\ge1$, we have
\begin{equation}\label{eq3_1}
\begin{aligned}
F_{a,b}(t_1,t_2,t_3) &= \alpha_b(t_2,t_3) \int_0^1 \gamma(ax;t_1)\beta_0(x;bt_2-t_3)dx \\
&+\sum_{c=1}^{b-1} \widetilde{\alpha}_{b,c}(t_2,t_3) \int_{\frac{c}{b}}^1 \gamma(ax;t_1)\beta_0(x;bt_2-t_3)dx\\
&-\int_0^1 \gamma(ax;t_1)\big( \beta(bx;t_2)+\beta(x;-t_3)\big) dx.
\end{aligned}
\end{equation}
\end{proposition}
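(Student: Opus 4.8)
The plan is to evaluate $F_{a,b}(t_1,t_2,t_3)$ by cutting the integral at the points where $\beta(bx;t_2)$ changes its polynomial branch and applying Lemma~\ref{3_3} on each piece. First I would split
\[ F_{a,b}(t_1,t_2,t_3) = \sum_{d=0}^{b-1} \int_{\frac{d}{b}}^{\frac{d+1}{b}} \gamma(ax;t_1)\,\beta(bx;t_2)\,\beta(x;-t_3)\,dx, \]
which is legitimate because $b\mid N=\mathrm{lcm}(a,b)$, so the cut points $\frac{d}{b}$ occur among the points $\frac{j}{N}$ used in the regularization \eqref{eq11}; grouping the $N$ regularized subintervals into $b$ blocks yields the displayed decomposition. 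On each open interval $(\frac{d}{b},\frac{d+1}{b})$, Lemma~\ref{3_3} (with its $t_1,t_2$ replaced by $t_2,t_3$) rewrites
\[ \beta(bx;t_2)\,\beta(x;-t_3) = e^{-dt_2}\alpha_b(t_2,t_3)\,\beta_0(x;bt_2-t_3) - \beta(bx;t_2) - \beta(x;-t_3) - 1, \]
so that $F_{a,b}$ separates into a main part built from the first summand and a remainder built from the last three.

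For the main part I would feed in Lemma~\ref{3_1} in the form $e^{-dt_2}\alpha_b(t_2,t_3) = \alpha_b(t_2,t_3) + \sum_{c=1}^{d}\widetilde{\alpha}_{b,c}(t_2,t_3)$ and then interchange the two finite sums. Writing $g(x):=\gamma(ax;t_1)\beta_0(x;bt_2-t_3)$, the $\alpha_b$-contribution reassembles as $\alpha_b(t_2,t_3)\sum_{d=0}^{b-1}\int_{\frac{d}{b}}^{\frac{d+1}{b}} g\,dx = \alpha_b(t_2,t_3)\int_0^1 g\,dx$, while swapping $\sum_{d=0}^{b-1}\sum_{c=1}^{d}$ into $\sum_{c=1}^{b-1}\sum_{d=c}^{b-1}$ collapses the $\widetilde{\alpha}_{b,c}$-contribution into $\sum_{c=1}^{b-1}\widetilde{\alpha}_{b,c}(t_2,t_3)\int_{\frac{c}{b}}^{1} g\,dx$. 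Together these give precisely the first two lines of \eqref{eq3_1}.

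For the remainder part, the terms $-\beta(bx;t_2)-\beta(x;-t_3)-1$ are independent of $d$, so summing their integrals over $d$ simply restores the full range $[0,1]$ and produces $-\int_0^1 \gamma(ax;t_1)\bigl(\beta(bx;t_2)+\beta(x;-t_3)+1\bigr)\,dx$. To match the third line of \eqref{eq3_1} it then suffices to discard the constant $1$, i.e.\ to show $\int_0^1 \gamma(ax;t_1)\,dx=0$. This is the one point where I expect genuine care to be needed: the coefficient of $t_1^k$ in $\gamma(ax;t_1)$ is $\frac{1}{k!}Cl_k(ax-[ax])$, a linear combination of functions $\cos(2\pi m a x)$ or $\sin(2\pi m a x)$ with $m\ge1$, each of which integrates to zero over the full period $[0,1]$; one only has to verify that the $k=1$ regularization in \eqref{eq11} does not disturb this, which it does not, since the regularized integral is again a limit of full-period integrals of cosine terms. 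Granting the vanishing of this integral, the remainder collapses to the stated third line and the identity \eqref{eq3_1} follows.
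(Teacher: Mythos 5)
Your proposal is correct and follows essentially the same route as the paper's own proof: split the integral at the points $\frac{d}{b}$, apply Lemma~\ref{3_3} on each piece, expand $e^{-dt_2}\alpha_b(t_2,t_3)$ via Lemma~\ref{3_1}, interchange the finite sums $\sum_{d=0}^{b-1}\sum_{c=1}^{d}=\sum_{c=1}^{b-1}\sum_{d=c}^{b-1}$, and kill the constant term using $\int_0^1 \gamma(ax;t_1)\,dx=0$. The only cosmetic difference is that you justify this last vanishing by term-by-term integration of the Fourier series defining $Cl_k$, whereas the paper cites the relation $\int_0^1 Li(ax;t)\,dx=0$; these are the same underlying orthogonality fact.
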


\begin{proof}
Splitting the integral $\displaystyle\int_0^1=\displaystyle\sum_{d=0}^{b-1} \displaystyle\int_{\frac{d}{b}}^{\frac{d+1}{b}} $ in the definition of $F_{a,b}$ (see \eqref{eq111}) and then using Lemma \ref{3_3}, we have
\begin{align*}
F_{a,b}(t_1,t_2,t_3)&=\sum_{d=0}^{b-1} \int_{\frac{d}{b}}^{\frac{d+1}{b}} \gamma(ax;t_1)\beta(bx;t_2)\beta(x;-t_3)dx\\
&=\sum_{d=0}^{b-1} e^{-dt_2}\alpha_b(t_2,t_3) \int_{\frac{d}{b}}^{\frac{d+1}{b}} \gamma(ax;t_1) \beta_0(x;bt_2-t_3)dx\\
&-\sum_{d=0}^{b-1}\int_{\frac{d}{b}}^{\frac{d+1}{b}} \gamma(ax;t_1)\big( \beta(bx;t_2)+\beta(x;-t_3)+1\big)dx\\
&=\sum_{d=0}^{b-1} \left( \alpha_b(t_2,t_3) + \sum_{c=1}^d \widetilde{\alpha}_{b,c}(t_2,t_3) \right) \int_{\frac{d}{b}}^{\frac{d+1}{b}} \gamma(ax;t_1) \beta_0(x;bt_2-t_3)dx\\
&-\int_0^1 \gamma(ax;t_1)\big( \beta(bx;t_2)+\beta(x;-t_3)+1\big)dx,
\end{align*}
where for the last equality we have used Lemma \ref{3_1}.
By the relation $\displaystyle\int_0^1 Li(ax;t)dx=0$, we have $\displaystyle\int_0^1 \gamma(ax;t_1)dx=0$.
Hence, the statement follows from and the interchange of order of summation $\displaystyle\sum_{d=1}^{b-1} \sum_{c=1}^{d} =\sum_{c=1}^{b-1} \sum_{d=c}^{b-1}$.
\end{proof}

We now deal with the integral of the second term of the right-hand side of \eqref{eq3_1}.

\begin{proposition}\label{3_5}
For any integers $a,b\ge1$ and $c\in \{0,1,\ldots,b-1\}$, we have
\begin{align*}
&\frac{1}{2\pi i} \int_{\frac{c}{b}}^1 \gamma(ax;2\pi it_1) \beta_0(x;2\pi i(bt_2-t_3))dx\\
&=-i\sum_{\substack{s\ge1\\p,q\ge 0\\p+s:{\rm odd}}} \frac{(-1)^{s}(2\pi i)^{q-1}}{q!a^{s}} S_{p+s+1}(\tfrac{ac}{b}) B_q(\tfrac{c}{b})  t_1^{p+1}(bt_2-t_3)^{q+s-1}\\
&+\sum_{\substack{s\ge1\\p,q\ge 0\\p+s:{\rm even}}} \frac{(-1)^{s}(2\pi i)^{q-1}}{q!a^{s}}\left( \zeta(p+s+1)B_q-C_{p+s+1}(\tfrac{ac}{b} )B_q(\tfrac{c}{b}) \right) t_1^{p+1}(bt_2-t_3)^{q+s-1},
\end{align*}
where $S_n(x)$ and $C_n(x)$ are defined in \eqref{eq1}.
\end{proposition}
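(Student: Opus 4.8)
The plan is to expand both generating functions into their Fourier/Taylor components and reduce the claimed identity to two families of elementary oscillatory integrals, which are then evaluated by repeated integration by parts. First I would extract the coefficients of $\gamma(ax;2\pi it_1)$: from the definition of the modified Clausen function together with \eqref{eq1}, the coefficient of $t_1^k$ equals $-2\pi iC_k(ax)$ when $k$ is odd and $2\pi S_k(ax)$ when $k$ is even, so that
\[
\frac{1}{2\pi i}\gamma(ax;2\pi it_1)=-\sum_{k\ge1,\,k\,{\rm odd}}C_k(ax)\,t_1^k-i\sum_{k\ge2,\,k\,{\rm even}}S_k(ax)\,t_1^k .
\]
Pairing this with $\beta_0(x;2\pi i(bt_2-t_3))=\sum_{Q\ge0}B_Q(x)\frac{(2\pi i)^Q}{Q!}(bt_2-t_3)^Q$ and integrating term by term (the interchange being legitimate under the regularization \eqref{eq11}, which also handles the conditionally convergent $k=1$ term), the problem is reduced to evaluating $\int_{c/b}^1 C_k(ax)B_Q(x)\,dx$ for odd $k$ and $\int_{c/b}^1 S_k(ax)B_Q(x)\,dx$ for even $k$.

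The key step is a reduction formula obtained by integrating by parts, integrating the Clausen factor and differentiating the Bernoulli polynomial. Using the antiderivatives $\int C_k(ax)\,dx=\frac{1}{2\pi a}S_{k+1}(ax)$ and $\int S_k(ax)\,dx=-\frac{1}{2\pi a}C_{k+1}(ax)$ together with $B_Q'(x)=QB_{Q-1}(x)$, one integration by parts lowers the degree of $B_Q$ by one, raises the index of the Clausen factor by one, and produces a boundary term. Because each step only raises the Clausen index, the problematic index $1$ is never evaluated, so the manipulation is valid throughout; I would iterate it until the Bernoulli polynomial is exhausted. The two endpoint evaluations are decisive: since $a\in\Z$, at $x=1$ one has $S_n(a)=0$ and $C_n(a)=\zeta(n)$, so $S$-type antiderivatives contribute nothing there while $C$-type antiderivatives contribute exactly $\zeta(n)B_Q$; at $x=\frac{c}{b}$ the antiderivatives contribute $S_n(\frac{ac}{b})B_Q(\frac{c}{b})$ and $C_n(\frac{ac}{b})B_Q(\frac{c}{b})$. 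This is precisely the source of the three kinds of terms on the right-hand side.

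It remains to reindex. Writing $k=p+1$ and letting $s$ be the number of integrations by parts used to produce a given boundary term, the Clausen index of that term is $p+s+1$ and the surviving Bernoulli index is $q=Q-(s-1)$, i.e. $Q=q+s-1$; summing over all original $k,Q$ reorganizes into the sum over $s\ge1,\ p,q\ge0$ in the statement. A parity check confirms the split: the boundary term at the $s$-th step is extracted from a factor of index $p+s$, which is $C$-type precisely when $p+s$ is odd, yielding an $S_{p+s+1}(\frac{ac}{b})$ boundary with no $\zeta$ contribution, and $S$-type precisely when $p+s$ is even, yielding the $\zeta(p+s+1)B_q$ and $C_{p+s+1}(\frac{ac}{b})B_q(\frac{c}{b})$ contributions; this matches the two sums exactly. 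Combining the factor $(2\pi i)^Q/Q!$ from $\beta_0$, the factor $Q!/q!$ from the descending derivatives of $B_Q$, and the factor $(2\pi a)^{-s}$ from the $s$ integrations by parts gives, up to sign and a residual power of $i$, the magnitude $\frac{(2\pi i)^{q-1}}{q!\,a^s}$. I expect the main obstacle to be purely the bookkeeping: the residual $i^s$ must combine with the accumulated signs from the alternating $C\leftrightarrow S$ steps and with the prefactors $-1$ and $-i$ so as to collapse into the clean overall $(-1)^s$ and the two prefactors $-i$ and $+1$ in the statement, and this sign tracking, rather than any analytic difficulty, is where the care is needed.
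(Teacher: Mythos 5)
Your proposal is correct and is essentially the paper's own proof: the paper evaluates the integral by the same repeated integration by parts, integrating the Clausen-type factor (raising its index, via $\tfrac{d}{dx}\gamma_s(ax;t)=at\gamma_{s-1}(ax;t)$) and differentiating the Bernoulli factor (via $\tfrac{d}{dx}\beta_0(x;t)=t\beta_0(x;t)$), so that everything reduces to boundary terms at $x=1$ (where $S_n(a)=0$ and $C_n(a)=\zeta(n)$) and at $x=\tfrac{c}{b}$, with exactly your reindexing $k=p+1$, $Q=q+s-1$ and parity split. The only difference is organizational: the paper performs the integration by parts at the generating-function level (infinite iteration, converging formally) and converts $Cl_k$ into $C_k$, $S_k$ only at the end, whereas you expand into $C_k$, $S_k$ coefficients first so each integral terminates after finitely many steps; the sign and power-of-$i$ bookkeeping you defer does collapse to the stated $(-1)^s$, $-i$ and $+1$ prefactors.
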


\begin{proof}
For an integer $s\ge1$, we let
\[ \gamma_s(x;t) =\sum_{k\ge s} \frac{Cl_k(x-[x])}{k!}t^k.\]
It is easily seen that for any integer $s\ge2$ we have 
\[ \frac{d}{dx} \gamma_s(ax;t) = at\gamma_{s-1}(ax;t) \quad \mbox{ and} \quad \frac{d}{dx} \beta_0(x;t) = t\beta_0(x;t).\]
By repeated use of the integration by parts and noting that $\gamma_1(x;t)=\gamma(x;t)$, we have
\begin{align*}
&\int_{\frac{c}{b}}^1 \gamma(ax;2\pi it_1) \beta_0(x;2\pi i(bt_2-t_3)) dx\\
&=\sum_{s\ge2}\frac{(-2\pi i (bt_2-t_3))^{s-2}}{(2\pi iat_1)^{s-1}} \left[ \gamma_s (ax;2\pi it_1) \beta_0(x;2\pi i (bt_2-t_3) \right]_{\frac{c}{b}}^1\\
&=\sum_{\substack{s\ge2\\p\ge s\\q\ge0}} \frac{(-1)^s(2\pi i)^{p+q-1}}{p!q!a^{s-1}}\left[ Cl_p(ax-[ax]) B_q(x)\right]_{\frac{c}{b}}^1 t_1^{p-s+1}(bt_2-t_3)^{q+s-2}\\
&=\sum_{\substack{s\ge1\\p,q\ge 0}} \frac{(-1)^{s+1}(2\pi i)^{p+q+s}}{(p+s+1)!q!a^{s}}\left[ Cl_{p+s+1}(ax-[ax]) B_q(x)\right]_{\frac{c}{b}}^1 t_1^{p+1}(bt_2-t_3)^{q+s-1}.
\end{align*}
By definition, for any $x\in \Q$ and $k\ge2$ we have 
\[ Cl_k(x-[x])= \begin{cases} -\dfrac{k!}{(2\pi i)^{k-1}} C_k(x) & k :{\rm odd}, \\ -i\dfrac{k!}{(2\pi i)^{k-1}} S_k(x) & k:{\rm even},\end{cases} \]
and hence, the above last line is computed as follows:
\begin{align*}
&i\sum_{\substack{s\ge1\\p,q\ge 0\\p+s:{\rm odd}}} \frac{(-1)^{s}(2\pi i)^{q}}{q!a^{s}}\left( S_{p+s+1}(a)B_q(1)-S_{p+s+1}(\tfrac{ac}{b}) B_q(\tfrac{c}{b}) \right) t_1^{p+1}(bt_2-t_3)^{q+s-1}\\
&+\sum_{\substack{s\ge1\\p,q\ge 0\\p+s:{\rm even}}} \frac{(-1)^{s}(2\pi i)^{q}}{q!a^{s}}\left( C_{p+s+1}(a)B_q(1)-C_{p+s+1}(\tfrac{ac}{b} )B_q(\tfrac{c}{b}) \right) t_1^{p+1}(bt_2-t_3)^{q+s-1},
\end{align*}
which completes the proof.
\end{proof}

\section{Proof of Theorem \ref{1_1}}
We can now complete the proof of Theorem \ref{1_1} as follows.

\begin{proof}[Proof of Theorem \ref{1_1}]
We consider only the real part of the coefficient of $t_1^{k_1}t_2^{k_2}t_3^{k_3}$ in the generating function $\frac{1}{2\pi i}F_{a,b}(2\pi it_1,2\pi it_2,2\pi it_3)$ for positive integers $k,k_1,k_2,k_3$ with $k=k_1+k_2+k_3$ odd.
By \eqref{eq3_1} with $t_j\rightarrow 2\pi i t_j$, we have
\begin{equation}\label{eq4_1}
\begin{aligned}
&\frac{1}{2\pi i} F_{a,b}(2\pi it_1,2\pi it_2,2\pi it_3) \\
&= \alpha_b(2\pi it_2,2\pi it_3) \times \frac{1}{2\pi i}\int_0^1 \gamma(ax;2\pi it_1)\beta_0(x;-2\pi i(t_3-bt_2))dx \\
&+\sum_{c=1}^{b-1} \widetilde{\alpha}_{b,c}(2\pi it_2,2\pi it_3)\times \frac{1}{2\pi i} \int_{\frac{c}{b}}^1 \gamma(ax;2\pi it_1)\beta_0(x;2\pi i(bt_2-t_3))dx\\
&-\frac{1}{2\pi i}\int_0^1 \gamma(ax;2\pi it_1)\big( \beta(bx;-2\pi i(-t_2))+\beta(x;-2\pi it_3)\big) dx.
\end{aligned}
\end{equation}
It follows from \eqref{eq2_5} that the real part of the coefficient of $t_1^{k_1}t_2^{k_2}t_3^{k_3}$ in the first and last term of the right-hand side of \eqref{eq4_1} can be expressed as $\Q$-linear combinations of $\pi^{2n}\zeta(k-2n)$ with $0\le n \le \frac{k-3}{2}$.
For the second term, using Proposition \ref{3_5} (see also Remark \ref{3_2}), we have
\begin{equation}\label{eq4_2}
\begin{aligned}
&\widetilde{\alpha}_{b,c}(2\pi it_2,2\pi it_3)\times \frac{1}{2\pi i} \int_{\frac{c}{b}}^1 \gamma(ax;2\pi it_1) \beta_0(x;2\pi i(bt_2-t_3))dx\\
&=-i\sum_{\substack{n_2\ge1\\n_3\ge0}}\sum_{\substack{s\ge1\\p,q\ge 0\\p+s:{\rm odd}}} \frac{(-1)^{s}\widetilde{A}_{b,c}(n_2,n_3)}{q!a^{s}}(2\pi i)^{n_2+n_3+q-1} S_{p+s+1}(\tfrac{ac}{b}) B_q(\tfrac{c}{b})t_1^{p+1}(bt_2-t_3)^{q+s-1}t_2^{n_2}t_3^{n_3}\\
&+\sum_{\substack{n_2\ge1\\n_3\ge0}} \sum_{\substack{s\ge1\\p,q\ge 0\\p+s:{\rm even}}} \frac{(-1)^{s}\widetilde{A}_{b,c}(n_2,n_3)}{q!a^{s}}(2\pi i)^{n_2+n_3+q-1} \left( \zeta(p+s+1)B_q-C_{p+s+1}(\tfrac{ac}{b} )B_q(\tfrac{c}{b}) \right)\\
&\times  t_1^{p+1}(bt_2-t_3)^{q+s-1}t_2^{n_2}t_3^{n_3},
\end{aligned}
\end{equation}
where we note that in the above both summations, $p+s+1$ runs over integers greater than 1.
Since for any $x\in \Q$ and $k\ge0$ we have $B_k(x)\in\Q$, the real part of the coefficient of $t_1^{k_1}t_2^{k_2}t_3^{k_3}$ in the first term (resp. the second term) of the right-hand side of \eqref{eq4_2} is a $\Q$-linear combination of $\pi^{2n+1}S_{k-2n-1}(\frac{ac}{b})$ with $0\le n \le \frac{k-3}{2}$ (resp. $\pi^{2n}C_{k-2n}(\frac{ac}{b})$ and $\pi^{2n}\zeta(k-2n)$ with $0\le n \le \frac{k-3}{2}$).
We therefore find that the real part of the coefficient of $t_1^{k_1}t_2^{k_2}t_3^{k_3}$ in the generating function $\frac{1}{2\pi i}F_{a,b}(2\pi it_1,2\pi it_2,2\pi it_3)$ can be expressed as $\Q$-linear combinations of $\pi^{2n+1}S_{k-2n-1}(\frac{ac}{b})$ and $\pi^{2n}C_{k-2n}(\frac{ac}{b})$ with $0\le n \le \frac{k-3}{2}$ and $c\in \Z/b\Z$.
Thus by Corollary \ref{2_2} we complete the proof.
\end{proof}

\begin{remark}\label{4_1}
As mentioned in the introduction, the value $\zeta_{a,b}(k_1,k_2,k_3)$ is expressible as $\Q$-linear combinations of double polylogarithms $Li_{r,s}(z_1,z_2)$ defined in \eqref{eq1_2}, where the expression is obtained from the partial fractional decomposition
\[  \frac{1}{x^ry^s} = \sum_{\substack{p+q=r+s\\p,q\ge1}} \frac{1}{(x+y)^p}\left(\binom{p-1}{s-1}  \frac{1}{x^{q}} + \binom{p-1}{r-1}\frac{1}{y^q} \right) \qquad  (r,s\in \Z_{\ge1}) \]
and the orthogonality relation
\[ \frac{1}{N} \sum_{n\in \Z/N\Z} \mu_N^{dn} = \begin{cases} 1 & N\mid d \\ 0 & N\nmid d\end{cases},\]
where $\mu_N=e^{2\pi i/N}$ and $d\in \Z$.
For example, one can check
\begin{equation}\label{eq4_3}
\zeta_{1,3}(1,1,3) =\sum_{u\in \Z/3\Z} Li_{1,4}(\mu_3^{-u},\mu_3^u) + \sum_{u\in \Z/3\Z} Li_{1,4}(\mu_3^u,1).
\end{equation}
From this, Theorem \ref{1_1} might be proved by the parity theorem for double polylogarithms examined in \cite[Eq.~(3.2)]{Erik}.
Although we do not proceed this in general, let us illustrate an example. 
As a special case of \cite[Eq.~(3.2)]{Erik}, one obtains
\begin{align*}
Li_{1,4}(z_1,z_2)+Li_{1,4}(z_1^{-1},z_2^{-1}) &= \sum_{n=1}^5 (-1)^{n+1} Li_n(z_1) \mathcal{B}_{5-n}(z_1z_2) - Li_1(z_1)\mathcal{B}_4(z_2) \\
&+\sum_{n=4}^5 \binom{n-1}{3} Li_n(z_2^{-1}) \mathcal{B}_{5-n}(z_1z_2) - Li_5(z_1z_2),
\end{align*}
where for each integer $k\ge0$ we set $\mathcal{B}_k(z)=\frac{(2\pi i)^k}{k!} B_k \left(\frac{1}{2} +\frac{\log(-z)}{2\pi i}\right)$.
We note that $Li_k(\mu_3^u)= C_k(\frac{u}{3})+iS_k(\frac{u}{3})$ and $\mathcal{B}_k(\mu_3)=\frac{(2\pi i)^k}{k!} B_k(\frac{1}{3})$ since $\log(-\mu_3)=-\frac{\pi i}{3}$.
With this, the above formula gives
\begin{align*}
&{\rm Re}\ (Li_{1,4}(\mu_3^{-1},\mu_3)+Li_{1,4}(\mu_3^{-2},\mu_3^2))=\frac{1}{243}\left(-843\zeta(5) +36\pi^2\zeta(3)+ 4\pi^4\log3\right),\\
&{\rm Re}\ (Li_{1,4}(\mu_3,1)+Li_{1,4}(\mu_3^2,1))=\frac{1}{243}\left(972\zeta(5)-12\pi^2\zeta(3)-4\pi^4 \log3- 81 \pi S_4(\tfrac13) -12\pi^3 S_2(\tfrac13)\right),\\
&2Li_{1,4}(1,1)=4\zeta(5) - \frac13 \pi^2 \zeta(3).
\end{align*}
where we have used $C_k(\frac13)=C_k(\frac23)=\frac{1-3^{k-1}}{2\cdot 3^{k-1}}\zeta(k)$ for $k\ge2$ and $C_1(\frac13)=C_1(\frac23)=-\frac{1}{2} \log 3$.
Substituting the above formulas to \eqref{eq4_3}, one gets \eqref{eq1_1}.
We have checked Theorem \ref{1_1} for $(a,b)=(1,3)$ and $(2,3)$ in this direction.
\end{remark}

\section{The zeta function of the root system $G_2$}

In this section, we give an affirmative answer to the question posed by Komori, Matsumoto and Tsumura \cite[Eq.~(7.1)]{KMT5}.

The zeta-function associated with the exceptional Lie algebra $G_2$ is defined for complex variables ${\bf s}=(s_1,s_2,\ldots,s_6)\in \C^6$ by
\[ \zeta({\bf s};G_2) := \sum_{m,n>0} \frac{1}{m^{s_1}n^{s_2}(m+n)^{s_3}(m+2n)^{s_4}(m+3n)^{s_5}(2m+3n)^{s_6}} .\]
The function $\zeta ({\bf s};G_2) $ was first introduced by Komori, Matsumoto and Tsumura (see \cite{KMT4,KMT5}), where they developed its analytic properties and functional relations. 
They also examined explicit evaluations of the special values of $\zeta({\bf k};G_2)$ at ${\bf k} \in \Z_{>0}^6$ (see \cite{Zhao} for ${\bf k} \in \Z_{\ge0}^6$), where we note that the series $\zeta({\bf k};G_2) $ converges absolutely for ${\bf k}\in \Z_{>0}^6$.
For example, they showed 
\[ \zeta(2,1,1,1,1,1;G_2)=- \frac{109}{1296} \zeta(7)+\frac{1}{18} \zeta(2)\zeta(5) .\]
Komori, Matsumoto and Tsumura \cite[Eq.~(7.1)]{KMT5} suggested a conjecture, which we now prove, that the value $\zeta(k_1,\ldots,k_6;G_2)$ with $k_1+\cdots+k_6$ odd lies in the polynomial ring over $\Q$ generated by $\zeta(k) \ (k\in \Z_{\ge2})$ and $L(k,\chi_3) \ (k\in \Z_{\ge1})$, where $L(s,\chi_3)$ is the Dirichlet $L$-function associated with the character $\chi_3$ defined by
\[ L(s,\chi_3) = \sum_{m>0} \frac{\chi_3(m)}{m^s}\]
and the character $\chi_3$ is determined by $\chi_3(n)=1$ if $n\equiv 1 \mod 3$, $\chi_3(n)=-1$ if $n\equiv 2\mod 3$ and $\chi_3(n)=0$ if $n\equiv 0 \mod 3$.
We remark that the second author \cite{Okamoto1} showed that the value $\zeta(k_1,\ldots,k_6;G_2)$ with $k_1+\cdots+k_6$ odd can be written in terms of $\zeta(s),L(s,\chi_3),S_r(\frac{d}{N}),C_r(\frac{d}{N})$ for $N=4,12$ and $0<d<N, \ (d,N)=1$ (see also \cite[\S7]{KMT5}).
The following theorem gives an affirmative answer to the question.

\begin{theorem}
For any integers $k,k_1,\ldots,k_6\ge1$ with $k=k_1+\cdots+k_6$ odd, the value $\zeta(k_1,\ldots,k_6;G_2)$ can be expressed as $\Q$-linear combinations of $\zeta(2n) \zeta(k-2n) \ (0\le n \le \frac{k-3}{2})$ and $L(2n+1,\chi_3)L(k-2n-1,\chi_3) \ (0\le n \le\frac{k-3}{2})$, where $\zeta(0)=-\frac{1}{2}$.
\end{theorem}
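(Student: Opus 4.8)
The plan is to reduce $\zeta(k_1,\ldots,k_6;G_2)$ to the Tornheim-type series $\zeta_{a,b}$ and then feed the result into Theorem~\ref{1_1}. First I would exploit that the six linear forms $m,n,m+n,m+2n,m+3n,2m+3n$ are precisely the positive roots of $G_2$, and that summing any single monomial $\frac{1}{m^{j_1}n^{j_2}(am+bn)^{j_3}}$ over $m,n>0$ gives exactly $\zeta_{a,b}(j_1,j_2,j_3)$. Using the partial fraction decomposition recalled in Remark~\ref{4_1} together with the additive relations among the roots (e.g.\ $(m)+(n)=m+n$, $(m+jn)+(n)=m+(j+1)n$, and $(m)+(m+3n)=(m+n)+(m+2n)=2m+3n$), I would eliminate three of the four composite roots from each summand while retaining $m$ and $n$ as the two coordinate forms. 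This writes $\zeta(\mathbf{k};G_2)$ as a $\Q$-linear combination of series $\zeta_{a,b}(j_1,j_2,j_3)$ with $(a,b)\in\{(1,1),(1,2),(1,3),(2,3)\}$ and $j_1+j_2+j_3=k$, plus boundary products $\zeta(r)\zeta(s)$ ($r+s=k$) arising when a composite exponent drops to $0$. This reduction is essentially the one already used by the second author in \cite{Okamoto1}, and since partial fractions preserve total degree, each resulting $\zeta_{a,b}$ has weight $k$, hence is again of odd weight.

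Next I would apply Theorem~\ref{1_1} to each $\zeta_{a,b}(j_1,j_2,j_3)$. Reading off the shape from Corollary~\ref{2_2} and Proposition~\ref{3_5}, each such value is a $\Q$-combination of $\pi^{2n}C_{k-2n}(\tfrac{ac}{b})$ and $\pi^{2n+1}S_{k-2n-1}(\tfrac{ac}{b})$ for $0\le n\le\frac{k-3}{2}$ and $0\le c<b$, together with the reflected arguments $\tfrac{bc}{a}$ coming from the $F_{b,a}$ term. The decisive observation is that for each relevant pair and its reflection the argument $\tfrac{ac}{b}\bmod 1$ has denominator dividing $3$: for $(2,3)$ one gets $\tfrac{2c}{3}\in\{0,\tfrac23,\tfrac13\}$, for $(1,3)$ one gets $\{0,\tfrac13,\tfrac23\}$, while the remaining pairs $(1,1),(1,2)$ and the reflections $(3,2),(2,1),(3,1)$ yield only arguments in $\{0,\tfrac12\}$. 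Thus, although $\mathrm{lcm}(2,3)=6$, no Clausen value of genuine denominator $6$ ever appears.

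It then remains to evaluate the Clausen-type functions and convert the powers of $\pi$. At denominators $1$ and $2$ one has $S_k=0$ and $C_k\in\Q\,\zeta(k)$; at denominator $3$, the identity from Remark~\ref{4_1} gives $C_k(\tfrac13)=C_k(\tfrac23)=\tfrac{1-3^{k-1}}{2\cdot3^{k-1}}\zeta(k)\in\Q\,\zeta(k)$, while $S_k(\tfrac13)=-S_k(\tfrac23)=\tfrac{\sqrt3}{2}L(k,\chi_3)$. Consequently $\pi^{2n}C_{k-2n}(\tfrac{d}{N})\in\Q\,\pi^{2n}\zeta(k-2n)=\Q\,\zeta(2n)\zeta(k-2n)$, using $\zeta(2n)\in\Q\,\pi^{2n}$ and the convention $\zeta(0)=-\tfrac12$. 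For the sine terms, $\pi^{2n+1}S_{k-2n-1}(\tfrac{d}{3})=\pm\tfrac{\sqrt3}{2}\pi^{2n+1}L(k-2n-1,\chi_3)$; since $\chi_3$ is an \emph{odd} character one has $L(2n+1,\chi_3)\in\Q\,\pi^{2n+1}/\sqrt3$ for all $n\ge0$, and substituting this makes the two factors of $\sqrt3$ cancel, giving $\pi^{2n+1}S_{k-2n-1}(\tfrac{d}{3})\in\Q\,L(2n+1,\chi_3)L(k-2n-1,\chi_3)$. Here $k-2n-1$ is even, so this is exactly the product of an odd and an even $L$-value as in the statement. Summing all contributions then yields the claimed $\Q$-linear combination of $\zeta(2n)\zeta(k-2n)$ and $L(2n+1,\chi_3)L(k-2n-1,\chi_3)$.

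The main obstacle will be the denominator bookkeeping in the second step: verifying that the arguments genuinely stay within denominator $3$ is what forces the answer to involve only $\zeta$ and $L(\cdot,\chi_3)$, rather than the Clausen values at denominators $4$ and $12$ appearing in the earlier treatment \cite{Okamoto1}, and it relies essentially on the refined form of Theorem~\ref{1_1} in which the arguments are $\tfrac{ac}{b}$ with $c<b$ (denominator dividing $b\le3$) rather than the coarse $\tfrac{d}{N}$ with $N=\mathrm{lcm}(a,b)$. The remaining technical care is in the partial fraction reduction, where one must keep track of convergence so as not to produce spurious $\zeta(1)$ terms.
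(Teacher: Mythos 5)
Your proposal is correct, and its skeleton is exactly the paper's: reduce $\zeta(\mathbf{k};G_2)$ to the four families $\zeta_{a,b}$ with $(a,b)\in\{(1,1),(1,2),(1,3),(2,3)\}$ via the partial-fraction reduction of \cite[Theorem 2.3]{Okamoto1}, apply Theorem \ref{1_1}, and convert the Clausen-type values into $\zeta$ and $L(\cdot,\chi_3)$ values using $\zeta(2n)\in\Q\pi^{2n}$ and $L(2n+1,\chi_3)\in\Q\sqrt{3}\,\pi^{2n+1}$. The one point where you diverge is the treatment of denominators, and there you overstate what is needed. You re-open the proof of Theorem \ref{1_1} (Corollary \ref{2_2} plus Proposition \ref{3_5}) to see that the arguments occurring are $\tfrac{ac}{b}$ ($0\le c<b$) and the reflected $\tfrac{bc}{a}$, hence of denominator $1$, $2$ or $3$ only, and you call this "decisive," claiming the coarse form with arguments $\tfrac{d}{6}$ would not suffice. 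The refinement itself is correct, but the necessity claim is not: the paper uses Theorem \ref{1_1} exactly as stated, with $d\in\Z/6\Z$, together with the facts $C_k(\tfrac{d}{6})\in\Q\,\zeta(k)$ and $S_k(\tfrac{d}{6})\in\Q\sqrt{3}\,L(k,\chi_3)$ for $k\ge2$. These follow from the distribution relation $Li_k(z)+Li_k(-z)=2^{1-k}Li_k(z^2)$ at $z=e^{\pi i/3}$, which gives $C_k(\tfrac16)=(2^{1-k}-1)C_k(\tfrac13)$ and $S_k(\tfrac16)=(2^{1-k}+1)S_k(\tfrac13)$, so genuine denominator-$6$ values are harmless. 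Thus your "main obstacle" dissolves: your route buys independence from that one extra fact about sixth roots of unity at the cost of invoking the internal structure of the proof of Theorem \ref{1_1}, while the paper's route uses the theorem as a black box. Your residual worry about boundary terms in the partial-fraction step is also moot: the reduction of \cite{Okamoto1} produces only $\zeta_{a,b}$ terms, and even if products $\zeta(r)\zeta(s)$ with $r+s=k$ and $r,s\ge2$ did appear, they are already of the allowed shape $\zeta(2n)\zeta(k-2n)$ since $k$ is odd.
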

\begin{proof}
In \cite[Theorem 2.3]{Okamoto1}, the second author proved that for any integers $l_1,\ldots,l_6\ge1$, the value $\zeta(l_1,\ldots,l_6;G_2)$ can be expressed as $\Q$-linear combinations of $\zeta_{a,b}(n_1,n_2,n_3)$ with $(a,b)=(1,1),(1,2),(1,3),(2,3)$, $n_1+n_2+n_3 =l_1+\cdots+l_6$ and $n_1,n_2,n_3\in\Z_{>0}$.
As a consequence, it follows from Theorem \ref{1_1} that the value $\zeta(k_1,\ldots,k_6;G_2)$ can be written as $\Q$-linear combinations of $\pi^{2n}C_{k-2n}(\frac{d}{6})$ and $\pi^{2n+1} S_{k-2n-1}(\frac{d}{6})$ with $0\le n\le \frac{k-3}{2}$ and $d\in \Z/6\Z$.
For any $d\in \Z/6\Z$ and $k\ge2$ we have $C_k(\frac{d}{6})\in \Q\zeta(k) $ and $S_k(\frac{d}{6}) \in \Q \sqrt{3}L(k,\chi_3)$, and then the result follows from the well-known formula: $\zeta(2n)\in \Q\pi^{2n}, \ L(2n+1,\chi_3)\in \Q \sqrt{3} \pi^{2n+1}$ for any $n\in\Z_{\ge0}$ (see \cite[Theorem 9.6]{AIK}).
\end{proof}

Let us illustrate an example of the formula for $\zeta(k_1,\ldots,k_6;G_2)$.
Applying the partial fractional decomposition repeatedly to the form $(m+n)^{-k_3}(m+2n)^{-k_4}(m+3n)^{-k_5}(2m+3n)^{-k_6}$, we get
\begin{align*}
&\zeta(1,1,1,1,1,2;G_2)\\
&=\frac{1}{2} \zeta_{1,1}(5,1,1)-16\zeta_{1,2}(5,1,1)+\frac{9}{2} \zeta_{1,3}(5,1,1)+9\zeta_{2,3}(4,1,2)+18\zeta_{2,3}(5,1,1).
\end{align*}
Then, by Theorem \ref{1_1} (actually we use Corollary \ref{2_2} together with Propositions \ref{2_3}, \ref{3_4} and \ref{3_5}), we have
\begin{align*}
\zeta(1,1,1,1,1,2;G_2) &= \frac{2507}{1296}\zeta(7)-\frac{505}{648}\pi^2\zeta(5)+\frac{9}{4}\pi S_6(\tfrac{1}{3})\\
&=\frac{2507}{1296}\zeta(7)-\frac{505}{108}\zeta(2)\zeta(5)+\frac{3}{8}L(1,\chi_3) L(6,\chi_3),
\end{align*}
where $L(1,\chi_3)=\frac{\pi}{3\sqrt{3}}$.


\end{document}